\def\frk{\frak}               
\def\Phi{{\frk n}}
\def\Phi{{\frk N}}
\def\opn#1#2{\def#1{\operatorname{#2}}} 
\opn\chara{char} \opn\length{\ell} \opn\pd{pd} \opn\rk{rk}
\opn\projdim{proj\,dim} \opn\injdim{inj\,dim} \opn\rank{rank}
\opn\depth{depth} \opn\sdepth{sdepth} \opn\hdepth{hdepth}
\opn\grade{grade} \opn\height{height} \opn\embdim{emb\,dim}
\opn\codim{codim}  \opn\min{min} \opn\max{max}
\opn\Tr{Tr} \opn\bigrank{big\,rank}
\opn\superheight{superheight}\opn\lcm{lcm}
\opn\trdeg{tr\,deg}
\opn\reg{reg} \opn\lreg{lreg} \opn\ini{in} \opn\lpd{lpd}
\opn\size{size}
\opn\div{div} \opn\Div{Div} \opn\cl{cl} \opn\Cl{Cl}
\opn\Spec{Spec} \opn\Supp{Supp} \opn\supp{supp} \opn\Sing{Sing}
\opn\Ass{Ass} \opn\Min{Min}
\opn\Ann{Ann} \opn\Rad{Rad} \opn\Soc{Soc}
\opn\Im{Im} \opn\Ker{Ker} \opn\Coker{Coker} \opn\Am{Am}
\opn\Hom{Hom} \opn\Tor{Tor} \opn\Ext{Ext} \opn\End{End}
\opn\Aut{Aut} \opn\id{id}  \opn\deg{deg}
\opn\nat{nat}
\opn\pff{pf}
\opn\Pf{Pf} \opn\GL{GL} \opn\SL{SL} \opn\mod{mod} \opn\ord{ord}
\opn\Gin{Gin} \opn\Hilb{Hilb}
\opn\aff{aff} \opn\con{conv} \opn\relint{relint} \opn\st{st}
\opn\lk{lk} \opn\cn{cn} \opn\core{core} \opn\vol{vol}
\opn\link{link} \opn\star{star}
\opn\gr{gr}
\def\pot#1#2{#1[\kern-0.28ex[#2]\kern-0.28ex]}
\opn\Mon{Mon}
\opn\lm{LM}
\opn\lexp{LE}
\opn\lt{LT}
\opn\lc{LC}
\opn\z{\mathbb Z}
\opn\q{\mathbb Q}
\opn\tail{tail}
\opn\quot{Quot}
\opn\L{L}
\opn\nf{NF}
\opn\ecart{ecart}
\opn\sp{spoly}
\opn\smiss{smiss}
\opn\gmiss{gmiss}
\opn\syz{Syz}
\opn\hc{HC}
\opn\h{H}
\opn\hp{HP}
\opn\p{P}
\opn\gp{gcd-poly}
\opn\ssyz{\sp_{syz}}
\opn\gsyz{\gp_{syz}}
\opn\sing{\textsc{Singular}}
\opn\dirlim{\underrightarrow{\lim}}
\opn\inivlim{\underleftarrow{\lim}}
\let\iso=\cong
\let\Dirsum=\bigoplus
\newcommand{\itce}{\item[$\circ$]}
\newcommand{\fracs}[2]{\displaystyle\frac{#1}{#2}}
\newcommand{\Dsum}[2]{\displaystyle\Dirsum_{#1}^{#2}}
\newcommand{\Sum}[2]{\displaystyle\sum_{#1}^{#2}}
\def\Implies{\ifmmode\Longrightarrow \else
        \unskip${}\Longrightarrow{}$\ignorespaces\fi}
\def\implies{\ifmmode\Rightarrow \else
        \unskip${}\Rightarrow{}$\ignorespaces\fi}
\def\iff{\ifmmode\Longleftrightarrow \else
        \unskip${}\Longleftrightarrow{}$\ignorespaces\fi}
\newtheorem{Theorem}{Theorem}[section]
\newtheorem{Lemma}[Theorem]{Lemma}
\newtheorem{Proposition}[Theorem]{Proposition}
\newtheorem{Remark}[Theorem]{Remark}
\newtheorem{Example}[Theorem]{Example}
\newtheorem{Definition}[Theorem]{Definition}
\newtheorem{Algorithm}[Theorem]{Algorithm}
\let\epsilon\varepsilon
\let\phi=\varphi
\let\kappa=\varkappa
\def\qed{\ifhmode\textqed\fi
      \ifmmode\ifinner\quad\qedsymbol\else\dispqed\fi\fi}
\def\textqed{\unskip\nobreak\penalty50
       \hskip2em\hbox{}\nobreak\hfil\qedsymbol
       \parfillskip=0pt \finalhyphendemerits=0}
\def\dispqed{\rlap{\qquad\qedsymbol}}
\opn\dis{dis}
\def\pnt{{\raise0.5mm\hbox{\large\bf.}}}
\opn\Lex{Lex}
\begin{document}

\title{\bf An algorithm to compute the  Hilbert depth}

\author{Adrian Popescu}

\thanks{The  support from the Department of Mathematics of the University of Kaiserslautern is gratefully acknowledged.}

\address{Adrian Popescu, Department of Mathematics, University of Kaiserslautern, Erwin-Schr\"odinger-Str., 67663 Kaiserslautern, Germany}
\email{popescu@mathematik.uni-kl.de}

\maketitle
\begin{abstract}
We give an algorithm which computes the Hilbert depth of a graded module based on a theorem of Uliczka. Partially answering a question of Herzog, we see that the Hilbert depth of a direct sum of modules can be strictly greater than the Hilbert depth of all the summands.

  \vskip 0.4 true cm
 \noindent
  {\it Key words } : depth, Hilbert depth, Stanley depth.\\
{\it 2010 Mathematics Subject Classification } : Primary 13C15, Secondary 13F20, 13F55,
13P10.

\end{abstract}

\section*{Introduction}
\vskip 0.7cm

Let $K$ be a field and $R = K[x_1 \ldots, x_n]$ be the polynomial algebra over $K$ in $n$ variables. On $R$ consider the following two grading structures: the $\z-$grading in which each $x_i$ has degree $1$ and
 the multigraded structure, i.e. the $\z^n-$grading in which each $x_i$ has degree the $i-$th vector $e_i$ of the canonical basis.

After Bruns-Krattenthaler-Uliczka \cite{bku} (see also \cite{Sh}),  a \textbf{Hilbert decomposition} of a $\z-$graded   $R-$module $M$ is a finite family $${\mathcal H} = (R_i,s_i)_{i\in I}$$ in which $s_i\in {\z}$  and $R_i$ is a $\z-$graded $K-$algebra retract of $R$ for each $i\in I$ such that  $$M \iso \Dsum{i\in I}{} R_i(-s_i)$$ as a graded $K-$vector space.

The \textbf{Hilbert depth} of $\mathcal H$ denoted by $\hdepth_1 \mathcal H$ is the depth of the $R-$module $\Dsum{i\in I}{} R_i(-s_i)$. The \textbf{Hilbert  depth} of $M$ is defined as $$\hdepth_1 (M) = \max\{\hdepth_1 \mathcal H\ |\ \textnormal{$\mathcal H$ is a Hilbert decomposition of }M\}.$$ We set $\hdepth_1 (0)=\infty$.

\begin{Theorem} (Uliczka \cite{Uli})\label{th uli}\ $\hdepth_1 (M)=\max\{e\ |\ {(1-t)}^e HP_M(t)\textnormal{ is positive}\}$, where $\hp_M(t)$ is the Hilbert$-$Poincar\'e series of $M$ and a Laurent series in $\z[[t, t^{-1}]]$ is called \textbf{positive} if it has only nonnegative coefficients.
\end{Theorem}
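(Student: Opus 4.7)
The plan is to establish both inequalities by a direct manipulation of Hilbert series. The key preliminary observation is that every $\z$-graded $K$-algebra retract $R_i$ of $R$ is, up to isomorphism, a standard graded polynomial ring $K[y_1,\ldots,y_{d_i}]$, so $\hp_{R_i}(t)=1/(1-t)^{d_i}$, while as an $R$-module the shifted retract $R_i(-s_i)$ is Cohen--Macaulay of depth $d_i=\dim R_i$. Combined with the fact that the depth of a finite direct sum equals the minimum of the summand depths, any Hilbert decomposition $\mathcal H=(R_i,s_i)_{i\in I}$ of $M$ satisfies
$$
\hp_M(t)=\sum_{i\in I}\frac{t^{s_i}}{(1-t)^{d_i}}\qquad\text{and}\qquad \hdepth_1\mathcal H=\min_{i\in I} d_i.
$$

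For the inequality $\hdepth_1(M)\le\max\{e\mid(1-t)^e\hp_M(t)\text{ positive}\}$, I would fix $\mathcal H$ with $\hdepth_1\mathcal H=d$ and multiply the displayed identity by $(1-t)^d$, obtaining
$$
(1-t)^d\hp_M(t)=\sum_{i\in I}\frac{t^{s_i}}{(1-t)^{d_i-d}}.
$$
Since each exponent $d_i-d\ge 0$, every summand expands as a Laurent series with only nonnegative (binomial-coefficient) terms, and summing preserves positivity.

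For the reverse inequality, I would assume $(1-t)^e\hp_M(t)=\sum_j a_j t^j$ with all $a_j\ge 0$ and use the retract $R'=K[x_1,\ldots,x_e]$ of dimension $e$, so that
$$
\hp_M(t)=\sum_j a_j\,\frac{t^j}{(1-t)^e}
$$
coincides with the Hilbert series of $\Dirsum_j R'(-j)^{a_j}$. Since graded $K$-vector spaces with the same Hilbert series are isomorphic, this provides a Hilbert decomposition of $M$ all of whose retracts have dimension $e$, yielding $\hdepth_1(M)\ge e$.

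The main obstacle I anticipate is the finiteness requirement on the index set: the sum over $j$ above may be infinite. For finitely generated $M$ one has $\hp_M(t)=p(t)/(1-t)^n$ with $p\in\z[t,t^{-1}]$, so the coefficients $a_j$ are eventually polynomial in $j$, and the infinite tail can be folded into finitely many retracts of larger (still $\ge e$) dimension by invoking the identity
$$
\frac{1}{(1-t)^{e+m}}=\sum_{j\ge 0}\binom{j+m-1}{m-1}\frac{t^j}{(1-t)^e}.
$$
Arranging this bookkeeping so that every retract in the final finite decomposition has dimension at least $e$ is the delicate technical step.
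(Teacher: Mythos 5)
Your setup and the inequality $\hdepth_1(M)\le\max\{e\mid(1-t)^e\hp_M(t)\text{ positive}\}$ are fine: graded algebra retracts of $R$ are polynomial subalgebras generated by linear forms, each $R_i(-s_i)$ is a Cohen--Macaulay $R$-module (via the retraction) of depth $d_i$, so $\hdepth_1\mathcal H=\min_i d_i$, and multiplying $\hp_M(t)=\sum_i t^{s_i}(1-t)^{-d_i}$ by $(1-t)^{\min_i d_i}$ visibly gives a positive series. Bear in mind that the paper does not prove Theorem \ref{th uli} at all --- it is quoted from \cite{Uli} --- so your argument has to stand entirely on its own, and the reverse inequality is exactly where the substance of Uliczka's theorem lies.

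That reverse inequality is where you have a genuine gap: the step you defer as ``delicate bookkeeping'' is the whole theorem. The naive decomposition $\Dirsum_j R'(-j)^{a_j}$ is infinite whenever $e<\dim M$, while the definition of a Hilbert decomposition requires a finite family. Folding the tail is indeed the right idea, but the identity you quote only expands $1/(1-t)^{e+m}$ into infinitely many dimension-$e$ pieces; what you actually need is the reverse rewriting, namely that for suitable $N$ the tail satisfies $\sum_{j\ge N}a_jt^j=\sum_{m=1}^{\dim M-e}c_m\,t^N/(1-t)^m$ with \emph{nonnegative integers} $c_m$, and nothing you wrote guarantees the nonnegativity. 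A complete argument would note that for $j\ge N\gg 0$ one has $a_j=\phi(j)$ for a polynomial $\phi$ of degree $\dim M-e-1$ with leading coefficient $G(1)/(\dim M-e-1)!>0$ (here $G(1)$ is the multiplicity, which the paper also uses in the proof of Theorem \ref{th: alg}), expand $\phi$ in the shifted binomial basis $\binom{j-N+m-1}{m-1}$, and check that each coefficient $c_m(N)$ is an integer (iterated finite differences of the integer sequence $(a_j)$) given by a polynomial in $N$ with positive leading coefficient, hence nonnegative once $N$ is large; one should also record that the resulting retract dimensions satisfy $e+m\le\dim M\le n$, so the retracts really exist inside $R$, and that positivity for $e>\dim M$ forces $M=0$ (this is proved in Theorem \ref{th: alg}, not in \cite{Uli}), so the maximum is well defined. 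Without this nonnegativity analysis --- which is precisely the decomposition lemma of \cite{Uli} --- your proof of $\hdepth_1(M)\ge e$ remains a plan rather than a proof.
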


If  $M$ is a multigraded $\z^n-$module, then one can define $\hdepth_n (M)$ as above by considering the $\z^n-$grading instead of the standard one. There exists an algorithm for computing the $\hdepth_n$ of a finitely generated multigraded module $M$ over the standard multigraded polynomial ring $K[x_1,\ldots, x_n]$ in Ichim and Moyano-Fern\'andez's paper \cite{IM} (see also \cite{IZ}).

The main purpose of this paper is to provide an algorithm to compute $\hdepth_1 (M)$, where $M$ is a graded $R-$module (see Algorithm \ref{alg: hdepth}). This is part of the author's Master Thesis \cite{Master}.

A \textbf{Stanley decomposition} (see \cite{Stanley}) of a $\z-$graded (resp. ${\z}^n-$graded)  $R-$module $M$ is a finite family $${\mathcal D} = (R_i,u_i)_{i\in I}$$ in which $u_i$ are homogeneous elements of $M$ and $R_i$ is a graded (resp. ${\z}^n-$graded) $K-$algebra retract of $R$ for each $i\in I$ such that $R_i\cap \Ann(u_i)=0$ and $$M=\Dsum{i\in I}{} R_i u_i$$ as a graded $K-$vector space.

The \textbf{Stanley depth} of $\mathcal D$ denoted by $\sdepth \mathcal D$ is the depth of the $R-$module $\Dsum{i\in I}{} R_iu_i$. The \textbf{Stanley depth} of $M$ is defined as $$\sdepth (M) = \max\{\sdepth \mathcal D\ |\ \textnormal{$\mathcal D$ is a Stanley decomposition of }M\}.$$ We set $\sdepth (0)=\infty$.

We talk about $\sdepth_1(M)$ and  $\sdepth_n(M)$ if we consider the $\z-$grading respectively the ${\z}^n-$grading of $M$. The Hilbert depth of $M$ is greater than the Stanley depth of $M$ and can be strictly greater (an example can be found in \cite{bku}).

Herzog posed the following question (see also \cite[Problem 1.67]{bg}): is $\sdepth_n(R \oplus m) = \sdepth_n (m)$, where $m$ is the maximal ideal in $R$? Since we implemented an algorithm to compute $\hdepth_1$, we have tested whether $\hdepth_1(R \oplus m) = \hdepth_1 (m)$ and as a consequence when $\sdepth_n(R \oplus m) = \sdepth_n (m)$. Proposition \ref{prop: p} says that Herzog's question holds for $n \in \{1, \ldots, 5, 7, 9, 11\}$, but Remark \ref{rem: end} says that for $n = 6$ it holds $\hdepth_1 (R \oplus m) > \hdepth_1 m$, which is a sign that in this case $\sdepth_n (R \oplus m) > \sdepth_n m$ and so Herzog's question could have a negative answer for $n = 6$. This is indeed the case as it was shown later by Ichim and Zarojanu in \cite{IZ}. Meanwhile  Bruns et. al. \cite{BFU} found another algorithm computing $\hdepth_1$ and Chen \cite{C} gave another one in the frame of ideals.

 We owe thanks to Ichim who suggested us this problem and to Uliczka who found a mistake in a previous version of our algorithm.

\vskip 1 cm
\section{hdepth Computation}
\vskip 0.7cm
In this section we introduce an algorithm which computes  $\hdepth_1$ (Algorithm \ref{alg: hdepth}) and prove its correctness (Theorem \ref{th: alg}). In the next section we provide some examples and some results related to \cite[Problem 1.67]{bg}.


\begin{Remark}{\em The algorithm presented in this section is based on Theorem \ref{th uli} and at a first glance it might look trivial. The difficulty lies in the fact that it is not clear how many coefficients of the infinite Laurent series have to be checked for positivity. This paper provides a bound up to which it suffices to check.}\end{Remark}

Recall first \cite[Corollary 4.1.8]{bruns_herzog} the definition of the Hilbert$-$Poincar\'e series of a module $M$ \begin{equation}\label{hp form}\hp_M(t)=\fracs{Q(t)}{{(1-t)}^n}=\fracs{G(t)}{{(1-t)}^{d}}\ ,\end{equation} where $d = \dim M$ and $Q(t),\ G(t) \in \z[t],\ G(1) \neq 0$. In fact, note that $G(1)$ is equal to the multiplicity of the module which is known to be positive.

The algorithm which we construct requires the module $M$ as the input. Actually we only need the $G(t)$ from (\ref{hp form}) and the dimension of $M$.

\begin{Definition}{\em Let $p(t) = \Sum{i=0}{\infty}a_i \cdot t^i \in \z[[t]]$ be a formal power series. By \textnormal{jet$_j$($p$)} we understand the polynomial jet$_j(p) = \Sum{i=0}{j}a_i \cdot
t^i$.}\end{Definition}

\begin{Algorithm}\label{alg: hdepth}{\em We now present the algorithm that computes the $\hdepth_1$ of a $\z-$graded module $\verb"M"$.
The algorithm uses the following procedures which can easily be constructed in any computer algebra system:
\begin{itemize}
\itce \verb"inverse(poly p, int bound)": computes the inverse of a power series \verb"p" till the degree \verb"bound",
\itce \verb"hilbconstruct(module M)": computes the second Hilbert series of the module  \verb"M" -  a way to do this in $\sing$ is to use the already built-in function \verb"hilb(module M, 2)" which returns the list of coefficients of the second Hilbert series and construct the series,
\itce \verb"positive(poly f)": returns \verb"1" if $f$ has all the coefficients nonnegative and \verb"0" else,
\itce \verb"sumcoef(poly f)": returns the sum of the coefficients of \verb"f",
\itce \verb"jet(poly p, int j)": returns the jet$_{\verb"j"}$ \verb"p". This procedure is already implemented in $\sing$,
\itce \verb"dim(module M)": returns the dimension of \verb"M". This procedure is already implemented in $\sing$.
\end{itemize}
 Below we give the algorithm \verb"hdepth(poly g, int dim__M)". Hence in order to compute $\hdepth_1 \verb"M"$, one considers $\verb"g(t) = hilbconstruct( M )"$ and $\verb"dim__M = dim(M)"$.
\begin{algorithm}[H]
\caption{ $\hdepth_1$ (poly g, int dim\_\_M)}

\begin{algorithmic}[1]
\REQUIRE \item[$\circ$]  a polynomial $g(t) \in \z[t]$ (equal to $\hp_M(t)$)
 \item[$\circ$] an integer $dim\_\_M = \dim M$
\ENSURE \item[$\circ$] $\hdepth_1 M$
\vspace{0.4cm}
\IF {positive($g$) = 1}
\RETURN $dim\_\_M$;
\ENDIF
\STATE poly $f = g$;
\STATE int $c$, $d$, $\beta$;
\STATE $\beta$ = $\deg(g)$;
\FOR{$d = dim\_\_M$ \TO $d = 0$}
 \STATE $d = d-1$;
 \STATE $f = \textnormal{jet}(\ g \cdot \textnormal{inverse}({\ (1-t)}^{dim\_\_M-d},\ \beta\ )\ );$
 \IF{positive($f$) = 1}
 \RETURN $d$;
 \ENDIF
 \STATE $c$ = sumcoef($f$);
 \IF{$c < 0$}
 \WHILE{$c < 0$}
 \STATE $\beta = \beta + 1$;
 \STATE $f = \textnormal{jet}(\ g \cdot \textnormal{inverse}(\ {(1-t)}^{dim\_\_M-d},\ \beta\ )\ );$
 \STATE $c$ = sumcoef($f$);
 \ENDWHILE
 \ENDIF
\ENDFOR
\end{algorithmic}
\end{algorithm}

}\end{Algorithm}

\begin{Theorem} \label{th: alg}Given a $\z-$graded module $M$, Algorithm \ref{alg: hdepth} correctly computes \begin{equation}\label{eq: max uliczka}\max \left\{n\ \middle|\ {(1-t)}^n \cdot \hp_M(t) \textnormal{ is positive }\right\}\end{equation} where $\hp_M(t) = \fracs{G(t)}{{(1-t)}^{\dim M}}$ is the Hilbert-Poincar\'e series of $M$. Hence, by Theorem \ref{th uli}, the algorithm computes the Hilbert depth of a module $M$ for $g = G(t)$ and $dim\_\_M = \dim M$.
\end{Theorem}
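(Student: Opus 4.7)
By Theorem \ref{th uli}, I need to show the algorithm returns the largest $d \in \{0, 1, \ldots, \dim M\}$ for which $p_d(t) := g(t)/(1-t)^{\dim M - d}$ is positive. The opening \textnormal{if}-block handles $d = \dim M$ (where $p_d = g$ is a polynomial), after which the FOR loop inspects $d = \dim M - 1, \ldots, 0$ in decreasing order and returns at the first success. The technical heart of the matter, as noted by the author, is controlling how many coefficients of the infinite series $p_d$ must be examined. Writing $p_d(t) = \sum_n \phi_d(n)\,t^n$, the expansion
\[
\phi_d(n) \;=\; \sum_i g_i \binom{n - i + \dim M - d - 1}{\dim M - d - 1}
\]
reveals that for $n \geq \deg g$, $\phi_d(n)$ is a polynomial in $n$ of degree $\dim M - d - 1$ with leading coefficient $g(1)/(\dim M - d - 1)!$, which is positive because $g(1) = G(1)$ is the multiplicity of $M$. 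In particular $\phi_d(n)$ is eventually nonnegative in $n$.

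The key device I plan to carry by induction is the loop invariant: \emph{at the start of the iteration for candidate depth $d$, the current value of $\beta$ satisfies $\phi_d(n) \geq 0$ for all $n > \beta$.} Under this invariant, \textnormal{positive}$(f) = 1$ (with $f = \textnormal{jet}_\beta(p_d)$) is equivalent to positivity of the entire series $p_d$, making the return statement conclusive. The base case $d = \dim M - 1$ with $\beta = \deg g$ is immediate, since $\phi_d(n) = g(1) > 0$ for $n \geq \deg g$. For the inductive step, assume the invariant at iteration $d$, and suppose \textnormal{positive}$(f) = 0$ (otherwise the algorithm returns $d$ correctly, as every larger $d'$ was ruled out at earlier iterations).

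Using $p_{d-1} = p_d/(1-t)$ one obtains the identity
\[
\textnormal{sumcoef}(\textnormal{jet}_\beta(p_d)) \;=\; \sum_{n=0}^{\beta} \phi_d(n) \;=\; \phi_{d-1}(\beta).
\]
The while loop increments $\beta$ until $\phi_{d-1}(\beta) \geq 0$, and it terminates because $\phi_{d-1}(\beta)$ is polynomial of degree $\dim M - d$ in $\beta$ with positive leading coefficient $g(1)/(\dim M - d)!$, hence tends to $+\infty$. Let $\beta_{\text{new}}$ be the resulting value. Since $\beta_{\text{new}} \geq \beta$, the inductive hypothesis still forces $\phi_d(n) \geq 0$ for $n > \beta_{\text{new}}$, and the recursion $\phi_{d-1}(n) = \phi_{d-1}(n-1) + \phi_d(n)$ then shows $\phi_{d-1}$ is non-decreasing past $\beta_{\text{new}}$; combined with $\phi_{d-1}(\beta_{\text{new}}) \geq 0$ this delivers the invariant at iteration $d-1$. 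Termination of the outer loop comes for free: $p_0 = \hp_M(t)$ is automatically positive (its coefficients are graded dimensions), so the positivity test succeeds no later than $d = 0$. I expect the main obstacle to be exactly this invariant — in particular checking it in the branch where the while loop is skipped (where one must observe that the previous $\beta$ already gave $\phi_{d-1}(\beta) \geq 0$, so the non-decreasing-tail argument applies verbatim) and ensuring that the shared variable $\beta$ is correctly tracked across iterations.
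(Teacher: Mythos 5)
Your inductive invariant --- that at the start of the iteration for candidate depth $d$ the coefficients of $g(t)/(1-t)^{\dim M-d}$ beyond the current bound $\beta$ are nonnegative, propagated via the partial-sum recursion and restored by the while loop --- is essentially the second half of the paper's proof, and your polynomial-growth argument for termination of the while loop is a fine (slightly different) way to get what the paper extracts from its nondecreasing-tail claim. The genuine gap is at the very first step: you reduce the theorem to showing that the algorithm returns the largest $d\in\{0,1,\dots,\dim M\}$ with $g(t)/(1-t)^{\dim M-d}$ positive. But the quantity (\ref{eq: max uliczka}) is a maximum over \emph{all} integers $n$, and Theorem \ref{th uli} puts no upper bound on the exponent; nothing in your argument rules out that $(1-t)^{n}\hp_M(t)$ is positive for some $n>\dim M$. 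This matters precisely in the branch you dispose of in one line: when $g=G(t)$ is already positive the algorithm returns $\dim M$, and without the missing claim this need not be the maximum in (\ref{eq: max uliczka}). (Even when the algorithm returns some $d<\dim M$, your ``every larger $d'$ was ruled out'' only covers $d'\le\dim M$; to exclude $n>\dim M$ you would need an extra observation, e.g.\ that multiplication by $1/(1-t)$ preserves positivity, which you never state.)

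The paper devotes the first part of its proof to exactly this point, and a remark following the theorem stresses that it is not contained in Uliczka's proof and must be established here. The argument is short: writing $G(t)=a_0+a_1t+\dots+a_kt^k$, one has $(1-t)^{\dim M+1}\hp_M(t)=(1-t)G(t)=a_0+(a_1-a_0)t+\dots+(a_k-a_{k-1})t^k-a_kt^{k+1}$, and nonnegativity of all these coefficients forces $0\le a_0\le a_1\le\dots\le a_k\le 0$, hence $G=0$, contradicting $M\neq 0$; the same telescoping applied to $(1-t)^{\dim M+\alpha}\hp_M(t)$ (or the positivity-preservation remark above) excludes every exponent above $\dim M$. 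Note that your very framing ``largest $d\in\{0,\dots,\dim M\}$'' silently assumes $\hdepth_1 M\le\dim M$, which is exactly the unproved point. Adding this claim closes the gap; the remainder of your proof follows the paper's route and is correct.
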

\begin{proof}Note that $G(1)$ is the multiplicity of the module $M$ and hence $G(1) > 0$.

Assume that $M \neq 0$. Denote the bound $\beta$ at the end of the loop where $d = i$ by $\beta_i$. In order to prove this theorem one has to show the following two claims:
 \begin{itemize}\itce the maximum from (\ref{eq: max uliczka}) does not exceed $\dim M$,
 \itce after the bound $\beta_i$ degree, the coefficients are nonnegative.
\end{itemize}
For the first part consider $G(t) = \Sum{\mu=0}{g}a_\mu \cdot t^\mu$. Note that $$(1-t)^{\dim M +1} \cdot \hp_M(t) = (1-t) \cdot G(t) = a_0 + (a_1 - a_0)\cdot t + \ldots + (a_g - a_{g-1}) \cdot t^g - a_g \cdot t^{g+1}.$$ If all coefficients would be nonnegative, we would obtain $$0 \geq a_g \geq a_{g-1} \geq a_{g-2} \geq \ldots \geq a_2 \geq a_1 \geq a_0 \geq 0$$ which implies that $G(t) = 0$. This will lead to a contradiction with $M \neq 0$. The same holds for $(1-t)^{\dim M + \alpha} \cdot \hp_M(t)$ by considering $(1-t)^{\dim M  + \alpha - 1} \cdot \hp_M(t)$ instead of $G(t)$, where $\alpha \geq 0$. Thus the maximum from (\ref{eq: max uliczka}) is smaller or equal than $\dim M$.

Note that if $G(t)$ already has all the coefficients nonnegative, then the algorithm stops by returning $\dim M$, and the result is correct since in this case $\hdepth_1 M = \dim M$.

For the second part we need to show that at each step $i$ the coefficient of the term of order $\beta_i$ in $\fracs{G(t)}{{(1-t)}^{\dim M - i}}$  is nonnegative and the coefficients of the terms of higher order are increasing (and hence nonnegative). Apply induction on $i$. For the first step, $d = \dim M - 1$,  $f = \fracs{G(t)}{(1-t)}$ and all the coefficients of the terms of order $\geq \beta_{\dim M - 1} = \deg G(t)$ are equal to the sum of the coefficients $G(1) > 0$. For the general step $i$, assume that at the beginning of loop $d = i$, we started with  $\fracs{G(t)}{{(1-t)}^{\dim M - i}} = \Sum{\mu=0}{\infty} b_\mu \cdot t^\mu$ which satisfied all the desired properties  by induction: the bound $\beta_{i}$ was increased (if required),  such that the coefficient sum $c_i := \Sum{\mu=0}{\beta_{i}} b_\mu > 0$ and all coefficients of higher order terms are nonnegative, i.e. $b_\mu \geq 0$ for $\mu \geq \beta_{i-1}$. We now consider the next step, $d = i-1$, and compute the new $f$ as in line 9 of the algorithm. In order to check that the coefficients of the terms of order higher than the bound $\beta_i$ are nonnegative. We have: $$\fracs{G(t)}{{(1-t)}^{\dim M - (i-1)}} = \overbrace{b_0 + (b_0 + b_1)\cdot t + \ldots + \underbrace{\left(\Sum{\mu=0}{\beta_{i}}b_\mu\right)}_{c_i > 0} \cdot t^{\beta_{i}}}^{= \textnormal{ jet}_{\beta_{i}}} + (c_i + b_{\beta_{i}+1}) \cdot t^{\beta_{i}+1} + \ldots$$

By induction, $0 < b_{\beta_{i}} \leq b_{\beta_{i}+1} \leq b_{\beta_{i}+2} \leq \ldots$ and since $c_i > 0$ we obtain $c_i + b_{\beta_{i} + \nu} > 0$ for $\nu \geq 0$.

The termination of the algorithm is trivial since we know that in the last loop we would consider $\fracs{G(t)}{{(1-t)}^{\dim M}} = \hp_M(t)$ which is positive by the definition, and hence it will return $\hdepth_1 M = 0$.
\hfill\ \end{proof}

\begin{Remark}{\em The maximum from the statement  of \cite[Theorem 3.2]{Uli} (see here Theorem \ref{th uli}) is always smaller than $\dim M$. This was not shown in Uliczka's proof and it has to be proved in Theorem \ref{th: alg}.}
\end{Remark}

\vskip 1cm
\section{Computational Experiments}
\vskip 1cm
The following examples illustrate the usage of the implementation of the algorithm in $\sing$, which can be found in the Appendix. Note that in the outputs we print exactly the jet we considered in our computations followed by ``\verb"+..."".
\begin{Example}{\em Consider the ring $\q[x, y_1, \ldots, y_5]$ and consider the ideal $I = (x)\cap(y_1, \ldots, y_5)$.
\begin{verbatim}
ring R=0,(x,y(1..5)),ds;
ideal i=intersect(x,ideal(y(1..5)));
module m=i;
"dim M = ",dim(m);
//  dim M = 5
hdepth( hilbconstruct( m ), dim(m) );
//  G(t)= 1+t-4t2+6t3-4t4+t5
//  G(t)/(1-t)^ 1 = 1+2t-2t2+4t3+t5 +...
//  G(t)/(1-t)^ 2 = 1+3t+t2+5t3+5t4+6t5 +...
//  hdepth= 3\end{verbatim}}
\end{Example}

\begin{Example}{\em Consider a module $M$ for which $\hp_M(t) = \fracs{2-3t-2t^2+2t^3+4t^4}{{(1-t)}^{\dim M}}$. Denote by $\verb"dim__M"$ the dimension of $M$.
\begin{verbatim}
ring R = 0, t, ds;
poly g = 2-3*t-2*t^2+2*t^3+4*t^4;
hdepth( g, dim__M);
// G(t)= 2-3t-2t2+2t3+4t4
// G(t)/(1-t)^ 1 = 2-t-3t2-t3+3t4+3t5 +...
// G(t)/(1-t)^ 2 = 2+t-2t2-3t3+3t5 +...
// G(t)/(1-t)^ 3 = 2+3t+t2-2t3-2t4+t5 +...
// G(t)/(1-t)^ 4 = 2+5t+6t2+4t3+2t4+3t5 +...
\end{verbatim} Hence, it results $\hdepth_1 M = \dim M - 4$.

As seen in the proof, we had to increase our bound if the coefficient sum was $\leq 0$. Note that in this example, the coefficient sum of jet$_4$$\left(\fracs{G(t)}{(1-t)}\right)$ is zero and thus we increase the bound to $5$ (the coefficient sum of the jet$_5$ will be equal to $3 > 0$).}
\end{Example}


\begin{Example}{\em
Consider $R = K[x_1, \ldots, x_n]$ for $n \in \{4,5,\ldots, 19\}$ and $m$ the maximal ideal. We computed
$\hdepth_1 m$, $\hdepth_1 (R \oplus m)$, $\ldots $, $\hdepth_1 (R^6\oplus m)$ and $\hdepth_1 (R^{100} \oplus m)$. We obtain the following results: \vskip 0.7cm
\begin{figure}[H]
\label{fig: figure}
\begin{center}
  \begin{tabular}{ r || c | c | c | c | c | c | c | c | c | c | c | c | c | c | c | c | }

    n & 4 & 5 & 6 & 7 & 8 & 9 & 10 & 11 & 12 & 13 &14 &15 &16 &17 &18 &19 \\ \hline \hline
    $\hdepth_1 (m)$ & 2 & 3&3 &4 &4  &5 &5 &6 &6  &7 &7 &8 &8 &9 &9 &10 \\ \hline
    $\hdepth_1 (R \oplus m)$ &2 &3 &4 &4 &5  &5 &6 &6 &7  &8 &8 &9 &9 &10 &11 &11 \\ \hline
    $\hdepth_1 (R^2 \oplus m)$ &3 &3 &4 &4 &5  &6 &6 &7 &8  &8 &9 &10 &10 &11 &11 &12 \\ \hline
    $\hdepth_1 (R^3 \oplus m)$ &3 &3 &4 &5 &5  &6 &7 &7 &8  &9 &9 &10 &10 &11 &12 &12 \\ \hline
    $\hdepth_1 (R^4 \oplus m)$ &3 &3 &4 &5 &6  &6 &7 &8 &8  &9 &9 &10 &11 &11 &12 &12 \\ \hline
    $\hdepth_1 (R^5 \oplus m)$ &3 &4 &4 &5 &6  &6 &7 &8 &8  &9 &10 &10 &11 &11 &12 &13 \\ \hline
    $\hdepth_1 (R^6 \oplus m)$ &3 &4 &4 &5 &6  &7 &7 &8 &8  &9 &10 &10 &11 &11 &12 &13 \\ \hline
    $\hdepth_1 (R^{100} \oplus m)$ &3 &4 &5 &6 &7  &8 &8 &9 &10  &11 &11 &12 &13 &13 &14 &15 \\ \hline
    \hline
  \end{tabular}
\end{center}
\caption{}
\end{figure}}
\end{Example}
\begin{Remark}\label{rem: end}{\em Note that for $n = 6$ we have $\hdepth_1 (R \oplus m) = 4 > 3 = \hdepth_1 m$. This is a sign that in this case $\sdepth_n (R \oplus m) > \sdepth_n (m)$ and so Herzogs's question could have a negative answer for $n = 6$. The difference $\hdepth_1 (R \oplus m) - \hdepth_1 m$ can be $> 1$ as one can see for $n = 18$.

Note that $\hdepth_1 (R^s \oplus m) - \hdepth_1 m$ increases when $s$ and $n$ increase. For example $\hdepth_1 (R^{100} \oplus m) - \hdepth_1 m = 5$ for $s = 100$ and $n=19$. }
\end{Remark}
\begin{Lemma}\label{lemma: lemma}{\em Let $n \in \mathbb{N}$ be such that $\hdepth_1 m = \hdepth_1 (R \oplus m)$. Then $\sdepth_n m = \sdepth_n (R \oplus m)$.}
\end{Lemma}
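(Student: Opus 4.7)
The plan is to sandwich $\sdepth_n(R\oplus m)$ between $\sdepth_n m$ and $\hdepth_1 m$, and then close the chain using the known equality $\sdepth_n m = \hdepth_1 m$ for the graded maximal ideal $m$.

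The lower bound $\sdepth_n m \leq \sdepth_n(R\oplus m)$ is immediate: if $\mathcal{D}$ is any optimal $\z^n$-graded Stanley decomposition of $m$, then concatenating it with the trivial Stanley decomposition $R = R \cdot 1$ of $R$ produces a Stanley decomposition of $R \oplus m$ of Stanley depth $\min(n, \sdepth_n m) = \sdepth_n m$.

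For the upper bound I would invoke the general comparison $\sdepth_n M \leq \hdepth_1 M$, valid for every $\z^n$-graded $R$-module $M$: any $\z^n$-graded Stanley decomposition $(R_i,u_i)_{i\in I}$ of $M$, after coarsening each $R_i$ to a $\z$-graded $K$-algebra retract and replacing $u_i$ by its total degree $s_i = \deg u_i \in \z$, becomes a $\z$-graded Hilbert decomposition of $M$ with the same minimum Krull dimension among the pieces. Applied to $M = R \oplus m$ and combined with the hypothesis, this yields $\sdepth_n(R\oplus m) \leq \hdepth_1(R\oplus m) = \hdepth_1 m$.

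The chain $\sdepth_n m \leq \sdepth_n(R \oplus m) \leq \hdepth_1 m$ collapses precisely when $\hdepth_1 m = \sdepth_n m$, and this equality is the step that relies on external input; it is the main conceptual obstacle, since without it the Hilbert-depth hypothesis cannot be transferred to a statement about Stanley depth. The Biro--Howard--Keller--Trotter--Young theorem gives $\sdepth_n m = \lceil n/2 \rceil$, while a direct application of Theorem~\ref{th uli} to $\hp_m(t) = (1-(1-t)^n)/(1-t)^n$ yields $\hdepth_1 m = \lceil n/2 \rceil$ (the value $\hdepth_1 m = \lceil n/2 \rceil$ is also visible in the computational table above). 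With both sides equal to $\lceil n/2 \rceil$, the chain collapses and we conclude $\sdepth_n m = \sdepth_n(R \oplus m)$, as desired.
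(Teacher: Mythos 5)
Your proof is correct and follows essentially the same route as the paper: both sandwich $\sdepth_n(R\oplus m)$ via $\sdepth_n m \leq \sdepth_n(R\oplus m) \leq \hdepth_1(R\oplus m) = \hdepth_1 m$ and close the chain with the known equality $\sdepth_n m = \lceil n/2\rceil = \hdepth_1 m$ from \cite{BIRO} and \cite{bku}. The only cosmetic differences are that the paper inserts the intermediate step $\sdepth_n(R\oplus m)\leq \hdepth_n(R\oplus m)\leq \hdepth_1(R\oplus m)$ and simply cites \cite{bku} for $\hdepth_1 m = \lceil n/2\rceil$, where you sketch deriving it from Theorem \ref{th uli} (the positivity check there is exactly the nontrivial content of \cite{bku}, so a citation is the cleaner move).
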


\begin{proof}By \cite{bku} and \cite{BIRO} we have $\hdepth_1 m = \left\lceil\fracs{n}{2}\right\rceil = \sdepth_n m$. It is enough to see that the following inequalities hold: \\
$\hdepth_1 m = \sdepth_n m \leq \sdepth_n (R \oplus m) \leq \hdepth_n (R \oplus m) \leq \hdepth_1 (R \oplus m).$

\ \hfill\ \end{proof}

\begin{Proposition}\label{prop: p}{\em If $n \in \{1,\ldots, 5,7,9,11\}$ then $\sdepth_n m = \sdepth_n (R \oplus m)$, that is Herzog's question has a positive answer.}
\end{Proposition}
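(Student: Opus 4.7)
The plan is to invoke Lemma \ref{lemma: lemma}, which reduces the Proposition to the purely numerical statement that $\hdepth_1(m) = \hdepth_1(R\oplus m)$ for each $n\in\{1,\ldots,5,7,9,11\}$. Once this reduction is made, the work splits according to whether the value of $n$ is covered by the table in Figure 1 or not.

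For $n\in\{4,5,7,9,11\}$ I would simply read off the entries in the first two data rows of Figure 1: the rows labelled $\hdepth_1(m)$ and $\hdepth_1(R\oplus m)$ agree in precisely these columns (giving values $2,3,4,5,6$ respectively). Since Algorithm \ref{alg: hdepth} correctly computes $\hdepth_1$ by Theorem \ref{th: alg}, this equality is rigorous and Lemma \ref{lemma: lemma} then yields $\sdepth_n(m)=\sdepth_n(R\oplus m)$.

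For the small cases $n\in\{1,2,3\}$, which lie outside the table, I would check the equality by hand using Theorem \ref{th uli}. Writing $\hp_m(t) = G_m(t)/(1-t)^n$ and $\hp_{R\oplus m}(t) = (G_m(t)+(1-t)^n)/(1-t)^n$, with $G_m(t)$ read off from the Koszul resolution of $m$ (namely $G_m(t)=\sum_{i=1}^n (-1)^{i-1}\binom{n}{i}t^i$), one multiplies by $(1-t)^e$ and inspects the coefficients for $e=n,n-1,\ldots$. For $n=1$ the module $m\cong R(-1)$ is free and both Hilbert depths equal $1$. For $n=2$, multiplying $\hp_m$ and $\hp_{R\oplus m}$ by $(1-t)$ produces series with all nonnegative coefficients while $(1-t)^2\hp_m$ and $(1-t)^2\hp_{R\oplus m}$ have a negative coefficient, yielding $\hdepth_1=1=\lceil 2/2\rceil$ in both cases. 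For $n=3$ the same calculation shows $(1-t)^2\hp_m$ and $(1-t)^2\hp_{R\oplus m}$ are positive (the $t^2$ coefficient is $0$ for $m$ and $1$ for $R\oplus m$, and from $t^3$ onward the coefficients stabilise at positive values), but $(1-t)^3\hp$ is negative in either case, so both Hilbert depths equal $2=\lceil 3/2\rceil$. In every one of these small cases the equality $\hdepth_1(m)=\hdepth_1(R\oplus m)$ holds, and Lemma \ref{lemma: lemma} gives the conclusion.

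There is no real obstacle: the conceptual content is concentrated in Lemma \ref{lemma: lemma} and in the correctness of Algorithm \ref{alg: hdepth}, both already established. The only point requiring attention is the verification for $n\in\{1,2,3\}$, which is a finite check and could equally well be performed by the implementation in the Appendix; one just has to make sure to write out enough of the Laurent expansion to see that no later coefficient drops below zero, which is automatic once the partial sum of coefficients stabilises at a positive constant (as happens here from degree $n$ onward).
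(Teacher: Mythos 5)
Your proposal is correct and follows the paper's own argument: the paper likewise reduces the statement via Lemma \ref{lemma: lemma} to the computed equality $\hdepth_1(m)=\hdepth_1(R\oplus m)$ for the listed $n$, which it takes from the algorithmic computations behind Figure 1. Your explicit hand verification of the small cases $n\in\{1,2,3\}$ (which lie outside the table) is accurate and merely fills in a detail the paper leaves implicit.
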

\begin{proof}Note that $\hdepth_1 m = \hdepth_1 (R \oplus m)$ for $n$ as above and apply Lemma \ref{lemma: lemma}.
\hfill\ \end{proof}

\newpage
\section*{Appendix}

As stated before, Algorithm \ref{alg: hdepth} was implemented as a procedure for the computer algebra system $\sing$ \cite{singularbook}. This procedure was used in order to obtain the results from Figure 1. The additional procedures which have been used were defined in Algorithm \ref{alg: hdepth}. In addition, we printed some information which we find useful for understanding the algorithm.

\lstset{language=C++,basicstyle=\footnotesize,frame=single}
\begin{lstlisting}
proc hdepth(poly g, int dim__M)
{
        int d;
	ring T = 0,t,ds;
		"G(t)=",g;
	if(positiv(g)==1)
		{return("hdepth=",dim__M);}
	poly f=g;
	number ag;
	int c1;
	int bound;
	bound = deg(g);
	for(d = dim__M; d>=0; d--)
	{
	   f = jet( g*inverse( (1-t)^(dim__M-d),bound ) , bound );
            if(positiv(f) == 1)
            {
		"G(t)/(1-t)^",dim__M-d,"=",f,"+...";
        	"hdepth=",d;
        	return();
            }
            c1=sumcoef(f);
	   if(c1<=0)
            {
       	        while( c1<0 )
                {
                bound = bound + 1;
                f =  jet( g*inverse( (1-t)^(dim__M-d),bound ) , bound );
                c1 = sumcoef(f);
                }
                "G(t)/(1-t)^",dim__M-d,"=",g,"+...";
            }
        }
}
\end{lstlisting}

\newpage


\begin{thebibliography}{99}
\bibitem{bg} A.M.\ Bigatti, P.\ Gimenez, E.\ S\'aenz-de-Cabez\'on: \emph{Monomial Ideals, Computations
and Applications}, Springer, 2013
\bibitem{BIRO} C.\ Biro, D.M.\ Howard, M.T.\ Keller, W.T.\ Trotter, S.J.\ Young, \emph{Interval partitions and Stanley depth}, J. Combin. Theory Ser. A 117 (2010), 475-482.
\bibitem{bruns_herzog} W.\ Bruns, J.\ Herzog: \emph{Cohen-Macaulay rings}, Revised edition, Cambridge University Press (1998).
\bibitem{bku} W.\ Bruns, C.\ Krattenthaler, J.\ Uliczka: \emph{Stanley decompositions and Hilbert depth in the Koszul complex}, J. Commut. Algebra 2 (2010), 327-357
\bibitem{BFU} W.\ Bruns, J.\ Moyano-Fern\'andez, J.\ Uliczka: {\em Hilbert regularity of ZZ-graded modules over polynomial rings}, (2013), arXiv:AC/1308.2917

\bibitem{C} R.-X.\ Chen: {\em How to compute the Hilbert depth of a graded ideal}, (2013), arXiv:AC/1308.3205
\bibitem{singularbook}
W.\ Decker, G.-M.\ Greuel, G.\  Pfister, H.\ Sch{\"o}nemann:
\newblock {\sc Singular} {3-1-6} --- {A} computer algebra system for polynomial computations.
\newblock {http://www.singular.uni-kl.de} (2013).
\bibitem{IM} B.\ Ichim, J. J.\ Moyano-Fern\'andez, {\em How to compute the multigraded Hilbert depth of a module}, to appear in Mathematische Nachrichten, arXiv:AC/1209.0084.
\bibitem{IZ} B.\ Ichim, A.\ Zarojanu: {\em An algorithm for computing the multigraded Hilbert depth of a module}, (2013), to appear in Experimental Mathematics, arXiv:AC/1304.7215
\bibitem{Master} A.\ Popescu: {\em Standard Bases over Principal Ideal Rings}, Master Thesis at Technische Universit\"{a}t Kaiserslautern (2013).
\bibitem{Sh} Y.H.\ Shen: {\em Lexsegment ideals of Hilbert depth 1}, (2012), arXiv:AC/1208.1822v1.
\bibitem{Stanley} R.P.\ Stanley: {\em Linear Diophantine equations and local cohomology}, Invent. Math. {\bf 68} (1982) 175-193.
\bibitem{Uli} J.\ Uliczka: \emph{Remarks on Hilbert series of graded modules over polynomial rings},
Manuscripta Math. 132 (2010), 159-168.
\end{thebibliography}
\end{document}